\newtheorem{proposition}{Proposition}[section]
\newtheorem{theorem}{Theorem}[section]
\newtheorem{example}{Example}
\newtheorem{remark}{Remark}[section]
\DeclareMathOperator*{\argmax}{arg\,max}
\DeclareMathOperator*{\argmin}{arg\,min}
\DeclareMathAlphabet{\altmathcal}{OMS}{cmsy}{m}{n}
\tikzstyle{rootnode} = [rectangle, rounded corners,   
\tikzstyle{node1} = [rectangle, rounded corners, 
\tikzstyle{node2} = [rectangle, rounded corners,  
\tikzstyle{node3} = [circle,  
\tikzstyle{node4} = [circle,  
\tikzstyle{arrow} = [thick,>=stealth]
\providecommand{\keywords}[1]
{
  \small	
  \textbf{\textit{Keywords---}} #1
}
\title{An Algorithm to Solve Cardinality Constrained Quadratic Optimization Problem with an Application to the Best Subset Selection in Regression}
\author{Vikram Singh$^{1}$, Min Sun$^{2}$  \\
        \small $^{1}$University of Central Oklahoma, Edmond, OK \\
                 \small  vsingh2@uco.edu \\
        \small $^{2}$The University of Alabama, Tuscaloosa, AL \\
                 \small msun@ua.edu\\
}
\date{March 2025} 
\begin{document}

\maketitle

\begin{abstract}
A lot of problems, from fields like sparse signal processing, statistics, portfolio selection, and machine learning, can be formulated as a cardinality constraint optimization problem. The cardinality constraint gives the problem a discrete nature, making it computationally challenging to solve as the dimension of the problem increases. In this work, we present an algorithm to solve the cardinality constraint quadratic optimization problem using the framework of the interval branch-and-bound. Interval branch-and-bound is a popular approach for finding a globally optimal solution in the field of global optimization. The proposed method is capable of solving problems of a wide range of dimensions.  
In particular, we solve the classical best subset selection problem in regression and compare our algorithm against another branch-and-bound method and GUROBI's quadratic mixed integer solver. Numerical results show that the proposed algorithm outperforms the first and is competitive with the second solver.
\keywords{Quadratic optimization, Cardinality constraint, Interval branch-and-bound, Best subset selection, Global optimization}
\end{abstract}
\section{Introduction}
In recent years, there has been a growing interest among researchers to solve optimization problems subject to a Cardinality Constraint (CC), perhaps because of its applications in various fields like high-dimensional statistics, machine learning, and sparse portfolio selection (see \cite{tillman2024cardinality}). 
In the field of sparse signal approximation, the goal is to find a sparse vector $x \in \mathbb{R}^{p}$ which fits the model $b=Ax+e$, where $A\in \mathbb{R}^{n \times p}$ is the matrix of linear measurements with $n << p$ and $e$ represents noise (see \cite{blumensath2008iterative}). This problem can be formulated as minimizing the quadratic function $\|b-Ax \|_{2}^{2}$ subject to a CC. Another example is the classical Best Subset Selection (BSS) in regression, which requires selecting a small number of predictors to be included in the model that minimizes the residual sum of squares. Motivated from these applications, we 
consider the following Cardinality Constrained Quadratic Optimization (CCQO) problem
\begin{equation}\label{ccqo}
    \min_{x \in B } \;\; \frac{1}{2}x^{T}Q x + q^{T}x +c \quad \text{subject to} \quad \|x\|_{0} \leq k, \tag{CCQO}
\end{equation}
where $ Q \in \mathbb{R}^{p \times p}$ is a symmetric positive semi-definite matrix, $q \in \mathbb{R}^{p}$, $B=[\underline{B},\overline{B}]$ is a $p$-dimensional box with $\underline{B},\overline{B}\in \mathbb{R}^{p}$, $k<p$ is a positive integer, and $\|\cdot\|_{0}$ is a pseudo-norm which gives the number of nonzero entries in a vector.
Note that even though the objective function is convex, the CC, $\|x\|_{0} \leq k$ makes \eqref{ccqo} non-convex and NP-hard (see \cite{natarajan1995sparse}). Due to the discrete nature of CC, we cannot use the existing techniques from the vast literature of continuous optimization to solve \eqref{ccqo}, therefore designing new procedures is crucial. 

In this paper, we adapted the framework of the interval branch-and-bound (IBB) method with novel modifications to solve the \eqref{ccqo} problem. IBB is a well-known method to solve global optimization problems even when the feasible set is non-convex, and the goal is to find all the minimizers of the problem. The new algorithm converges to the optimal solution of \eqref{ccqo} in a finite number of iterations. We demonstrate the efficiency of our algorithm by solving the BSS problem up to dimension 2000 using synthetic data.
The rest of the paper is organized as follows. In section \ref{sec:intro-to-bss}, we briefly introduce the BSS problem, along with two existing methods to solve it. 
In section \ref{sec:bss-using-ivlbb}, we present a new algorithm based on IBB to solve \eqref{ccqo}. Section \ref{sec:ibb_for_bss} shows the application of the proposed algorithm to solve the BSS problem. We present the numerical results in section \ref{sec:test_results} followed by the conclusion in section \ref{sec:discussion}.
\section{Best subset selection in regression}\label{sec:intro-to-bss}
Consider a linear regression model \( y=X\beta+\varepsilon \),
where \(y \in \mathbb{R}^{n}\) is a response vector, \(X \in \mathbb{R}^{n \times p}\) is a design matrix, \(\beta \in \mathbb{R}^{p}\) is an unknown coefficient vector, and \(\varepsilon \in \mathbb{R}^{n}\) is a noise vector. The columns of $X$ have been standardized to have zero mean and unit $l_{2}$-norm. 
One common objective is to find a desired coefficient vector $\beta$ by minimizing the residual sum of squares (RSS). This is the so-called ordinary least squares problem
\begin{equation}\label{ols}
  \min_{\beta \in \mathbb{R}^{p} } \; \; \|y-X\beta\|_{2}^{2}. \tag{OLS}  
\end{equation}
\eqref{ols} is an unconstrained convex optimization problem and can be solved efficiently by many existing optimization algorithms. In particular, if $X$ has full rank, its optimal solution is uniquely determined by $\Hat{\beta}=(X^{T}X)^{-1}X^{T}y$.  

However, to better interpret the underlying data, we want to choose a model with only $k$ (out of $p$) predictors that would fit the data well. This task gives rise to the following BSS problem
\begin{equation}\label{bss}
    \min_{\beta \in \mathbb{R}^p} \; \; \|y-X\beta\|_{2}^{2} \quad \text{subject to} \quad \|\beta\|_{0} \leq k.  
    \tag{BSS}
\end{equation}
\eqref{bss} is well known to be computationally challenging to solve as the number of possible subsets grows rapidly with an increase in the dimension of the problem. Next, we present two methods to solve the \eqref{bss} problem.
\subsection{Branch and bound algorithm for \eqref{bss} } 
 In pattern recognition literature, the branch and bound (BB) algorithm for feature selection using a monotone criterion function was first introduced by \cite{narendra1977branch}, and it has become popular for solving the feature selection problem. Several variants have appeared since then (see \cite{fukunaga1990intro}, \cite{gatu:2006}, \cite{somol2004fast}). 
In particular, \cite{fukunaga1990intro} introduced an in-level node ordering to improve the BB algorithm by eliminating undesired features at early stages. Another improvement was given by \cite{yu1993more}, which provided a ``minimum-solution-tree'', a subtree of the original BB tree, that is enough to explore to get the optimal solution, saving some computational effort. Because of the unique structured nature of BB, it can be naturally visualized or represented by a tree. In particular, BB for \eqref{bss} has been represented by a regression tree (see \figurename{ \ref{fig:BBtree}}). 
\subsection{Mixed integer optimization formulation for \eqref{bss}}
In a recent work, \cite{bertsimasEtal:2015} converted  \eqref{bss} into a mixed integer optimization (MIO) problem by rewriting the cardinality constraint as
\[ -Mz\leq \beta \leq Mz, \quad  
  \sum_{i=1}^{p} z_{i} \leq k, \quad 
   z \in \{0,1\}^{p}, \]  
where $z$ is a vector of binary variables and $M$ is a technical uniform variable bound. The resulting problem can be solved using any MIO solver. In particular, they used two different problem formulations for $p\leq n$ and $p>n$ and demonstrated that solving \eqref{bss} problem in higher dimensions is within reach now. A special formulation for the $p>n$ case results in a problem with dimension $n$, which is particularly useful when $p \gg n$.
\section{Interval branch and bound to solve \eqref{ccqo}}
\label{sec:bss-using-ivlbb}
To the best of our knowledge, IBB has not been applied to solve \eqref{ccqo}, perhaps because the CC is not included within the standard formulation of the constraint optimization problems solved by IBB. 
We would have to modify the IBB algorithm to make it possible to treat the CC effectively. 
We now highlight major features in our algorithm to take advantage of special properties of the \eqref{ccqo} problem.
\subsection{Branching} 
The IBB would normally require repeated partitions of the search domain $B$ to yield the desired global convergence. It is no longer necessary for the optimization problems with CC. In particular, we need to partition an interval only at $0$; if an interval does not include $0$, there is no need to partition the interval. If $0$ is strictly between $a$ and $b$, interval $[a, b]$ would be split into $[a, 0)$, $[0, 0]$, $(0, b]$.  If $0$ is equal to $a$ or $b$, $[a, b]$ would be split into two sub-intervals $[0,0]$ and $(0,b]$ or $[a,0)$ respectively. 
\subsection{Bounding} 
For a working box $V$, a standard way of bounding \(f(V)=\{f(x): x \in V \}\) is done by constructing an inclusion function \(F(\cdot)\) with \(\inf \>F(V)\) as an acceptable lower bound of \(f(V)\). Consequently, \(\inf \> F(V)\) could also be used to test bound-based deletion conditions. However, if partitions occur only at zeroes, the bound-based deletion condition won$'$t be effective since there is no chance to improve the accuracy of the bound after initial branching. In such a circumstance, we propose to use a tight lower bound of \(f(V)\), denoted by \( lb \, f(V) \), defined as
 \begin{equation*}
     lb\;f(V)=\min\;\{f(x): x \in V\}.
 \end{equation*}
\subsection{Deletion} 
In addition to the standard bound-based deletion condition, we apply the CC to remove additional sub-boxes. A working box $V$ is infeasible, or all its sub-boxes are infeasible (thus $V$ can be deleted) if it satisfies any one of these deletion conditions:
\begin{labeling}{AAAAA}  
    \item[($D1,V$)] \( \sum_{i=1}^{p} 1(0 \notin V_{i}) > \, k \), the number of intervals not containing $0$ is greater than $k$.
    \item[($D2,V$)] \( p-k \, < \sum_{i=1}^{p} 1(V_{i}=[0,0]) \), the number of degenerate intervals $[0, 0]$ is greater than $p-k$.
    \item[($D3,V$)] \( \sum_{i=1}^{p} 1(0 \notin V_{i}) = \, k \), the number of intervals not containing $0$ is equal to $k$.
    \item[($D4,V$)] \(  p - k = \sum_{i=1}^{p} 1(V_{i}=[0,0]) \), the number of degenerate intervals $[0, 0]$ is equal to $p-k$.    
\end{labeling}
With all these strategies included, we obtain Algorithm \ref{alg:ibb-bbss} (IBB$^{+}$) to solve \eqref{ccqo}.
\begin{algorithm}
\caption{IBB$^{+}$} 
\label{alg:ibb-bbss}
\KwIn{$p,k,B,f$}
\KwOut{$\beta^{*},f^{*}$}
    Take $Y:=B$.\\
    Find a feasible point $\Tilde{y}$ and set $\Tilde{f}=f(\Tilde{y})$, $\Tilde{\beta}=\Tilde{y}$.\\
    Set $y=lb \> f(Y)$. \\
    Initialize the list $L=\{(Y,\Tilde{y},y)\}$. \\
    \eIf{$L$ is empty \label{alg:ibb_selectbox}}{
       Set optimal point $\beta^{*}=\Tilde{\beta}$, optimal value $f^{*}=\Tilde{f}$, \Return. \\
    }{
    Select a node $(Y,\Tilde{y},y)$ from the list $L$. \label{alg:ibb_boxSelection} \\
    }
    Choose a coordinate direction $\eta$ such that $0 \in Y_{\eta}$ and $Y_{\eta} \neq [0,0]$. If there is no such coordinate direction, go to step \ref{alg:ibb_selectbox}.\\
    Partition the sub-box $Y_{\eta}$ at $0$ to get $r$ number of child boxes such that $Y=\bigcup\limits_{j=1}^{r} V_{j}$. \label{alg:ibb_partDir} \\
    Remove $(Y,\Tilde{y},y)$ from the list $L$.\\
    \For{$j=1$ to $r$}{If any of $(D1,V_{j})$ or $(D2,V_{j})$ hold, go to step \ref{alg:ibb_endbranchforloop}.\\
    If any of $(D3,V_{j})$ or $(D4,V_{j})$ hold, find a feasible point $\Tilde{v}_{j}$ such that $f(\Tilde{v}_{j})=lb \, f(V_{j})$, update $\Tilde{f}$ (also $\Tilde{\beta}$) if possible, go to step \ref{alg:ibb_endbranchforloop}.\\
    \textit{(Feasibility sampling)} Find a feasible point $\Tilde{v}_{j}$ in the box $V_{j}$. Update $\Tilde{f}$ (also $\Tilde{\beta}$) if possible. \\
    (\textit{Bound}) Calculate $v_{j}=lb \, f(V_{j})$. \label{alg:ibb_boundstep} \\
    If $\Tilde{f} < v_{j}$, go to step \ref{alg:ibb_endbranchforloop}. \\
    Add $(V_{j},\Tilde{v}_{j},v_{j})$ at the end of the list $L$. \\
    Go to the next iteration of $j$ loop.  \label{alg:ibb_endbranchforloop} \\
    }
    Go to step \ref{alg:ibb_selectbox}. \\
\end{algorithm}
Convergence of IBB$^{+}$ will not follow directly from the convergence analysis of the IBB due to the special cut at $0$ as the mesh of the partition will not get smaller. However, if tight $\inf \> F(V)$ is used, we still have global convergence to the optimal solution in a finite number of steps and monotone convergence to the optimal objective function value.
\begin{theorem}
\textup{IBB$^{+}$} reaches the optimal solution of \eqref{ccqo} in a finite number of iterations.
\end{theorem}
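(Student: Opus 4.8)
The plan is to separate the claim into a \emph{finiteness} statement (the branch-and-bound tree built by IBB$^{+}$ has finitely many nodes, so $L$ is emptied after finitely many iterations) and a \emph{correctness} statement (the incumbent $(\tilde\beta,\tilde f)$ returned when $L$ becomes empty is a global minimizer of \eqref{ccqo}). As the remark preceding the statement points out, the standard IBB convergence argument is unavailable because cutting only at $0$ keeps the mesh bounded away from zero; finiteness must therefore be extracted from the combinatorial structure of the cuts rather than from a shrinking partition.

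For finiteness, at any working box $V$ I would classify the coordinates into three types: $a(V):=\sum_i 1(0\notin V_i)$ forced-nonzero directions, $b(V):=\sum_i 1(V_i=[0,0])$ forced-zero directions, and the remaining $c(V)=p-a(V)-b(V)$ \emph{branchable} directions (those with $0\in V_i$, $V_i\neq[0,0]$). The decisive observation is that splitting the chosen $\eta$ at $0$ turns $V_\eta$ in \emph{every} child into either $[0,0]$ or an interval avoiding $0$; hence $c$ strictly decreases along each parent-to-child step while $a+b$ rises by exactly one. Since a node is branched only when none of $(D1)$--$(D4)$ fires, that is, while $a(V)<k$ and $b(V)<p-k$, every root-to-leaf path has length at most $(k-1)+(p-k-1)+1\le p$; with branching factor $r\le 3$ this bounds the total number of nodes. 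Each node is selected, branched, and removed from $L$ exactly once and the inner loop is finite, so the algorithm halts.

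For correctness I would first note that $\tilde f$ is always the value of a genuinely feasible point, so $\tilde f\ge f^{*}$ throughout, where $f^{*}$ is the optimal value (attained because $f$ is continuous and the feasible set is a closed subset of the compact box $B$). To get the reverse inequality I would fix a global minimizer $x^{*}$ and follow the unique path obtained by sending each coordinate to its sign-branch under $x^{*}$ (negative, zero, or positive child). Along this path the resolved coordinates match the signs of $x^{*}$ and the unresolved ones still contain $x^{*}$, so $x^{*}\in V$ at every node, giving $lb\,f(V)\le f(x^{*})=f^{*}\le\tilde f$; hence the bound test ``$\tilde f<v_j$'' never deletes this box, and since $a(V)\le k$ and $b(V)\le p-k$ hold along the path, neither $(D1)$ nor $(D2)$ deletes it either. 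Because each step raises $a$ or $b$ by one, the path reaches a node where $(D3)$ or $(D4)$ holds exactly, where the algorithm solves the box-restricted convex QP and records a feasible point of value $lb\,f(V)\le f(x^{*})$, forcing $\tilde f=f^{*}$.

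The step I expect to be the main obstacle is verifying that this terminal $(D3)$/$(D4)$ node genuinely recovers $x^{*}$ rather than excluding it. Under $(D3)$ exactly $k$ directions are forced nonzero; since $x^{*}\in V$, those $k$ coordinates are nonzero in $x^{*}$, so $\|x^{*}\|_0\ge k$, forcing $\|x^{*}\|_0=k$ and every remaining coordinate of $x^{*}$ to vanish, which is precisely what the direct solve imposes. Under $(D4)$ exactly $p-k$ directions are degenerate, so $\mathrm{supp}(x^{*})$ lies in the $k$ free directions and $x^{*}$ is admissible for the box-QP over them; in both cases the computed minimizer has at most $k$ nonzeros and is thus \eqref{ccqo}-feasible with value $\le f(x^{*})$, which is exactly why the aggressive deletions $(D1)$ and $(D2)$ are safe. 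I would also dispose of the minor technicality that splitting at $0$ yields half-open boxes such as $[a,0)$ and $(0,b]$ by evaluating $lb\,f(V)$ on their closures and noting that the only added points lie on the face $x_\eta=0$, which is itself the sibling $[0,0]$ child, so no feasible value is lost or double-counted.
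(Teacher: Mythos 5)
Your proposal is correct and follows essentially the same route as the paper's proof: finiteness of the branch-and-bound tree (since every cut at $0$ permanently resolves a coordinate, bounding the depth by $p$) combined with the observation that the tight lower bound $lb\,f(\cdot)$ guarantees an optimal solution is identified. Your version is substantially more detailed than the paper's — in particular, you explicitly verify that the box containing a global minimizer survives the bound test and the deletion conditions $(D1)$--$(D4)$ until a terminal $(D3)$/$(D4)$ node recovers $f^{*}$, a soundness step the paper leaves implicit by arguing only the worst case in which no bound-based deletion occurs.
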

\begin{proof}
Each box in IBB$^{+}$ is visited exactly once. We either delete a selected box (hence discarding all of its child boxes) or add it to the list for further branching. As there are a finite number of boxes to visit, IBB$^{+}$ will find the optimal solution in a finite number of iterations. In the worst case, the standard bound-based deletion condition is never satisfied before reaching the optimal solution. In that event, the tree generated by the IBB$^{+}$ would contain all the feasible subsets and an optimal solution would be identified since $lb \> f(\cdot)$ is used in IBB$^{+}$. 
\end{proof}

In step \ref{alg:ibb_partDir} of IBB$^{+}$, the value of $r$ is 3 if $0$ is an interior point of $Y_{\eta}$ and $r$ is 2 if $0$ is at the boundary of $Y_{\eta}$. Also, we can choose more than one coordinate direction to partition our box.  
Due to the special branching in IBB$^{+}$, we do not have to use the standard interval box. Instead, we can use an integer flag to represent such a box in each coordinate direction. For $a<0$, $b>0$, the flag $0$ represents the degenerate interval $[0,0]$, the flag 1 represents an interval containing $0$ in it ($[a,0]$,$[0,b]$, $[a,b]$), and the flag $2$ represents an interval not containing $0$ in it ($[a,0)$ or $(0,b]$ or $[a,0) \cup (0,b]$). Therefore, a box $Y$ in our discussion can be interpreted as a multidimensional interval box or an integer flag vector. Under the flag interpretation, $r$ is always 2, and the initial box $B$ can be represented by a $p-$dimensional vector with all the components as integer $1$. For step \ref{alg:ibb_partDir}, we will partition the box $Y$ into 2 sub-boxes $V_{1}$ and $V_{2}$ such that $V_{1\eta}=0$ and $V_{2\eta}=2$. This simplified representation will save us computational time as well as memory space. A vector of such integer flags, along with the original search domain, would allow us to have a non-interval algorithm while preserving all the major convergence properties of the IBB method. 

Selecting a new node from the list (step \ref{alg:ibb_boxSelection}) to partition further is also a crucial step in any branch-and-bound algorithm. There are several selection criteria in the literature to select a node from the list (see \cite{morrison2016branch}); among them, Depth-First Search (DFS) and Best-First Search (BFS) are two popular choices. In our framework, BFS corresponds to selecting a node with the smallest $lb \, f(\cdot)$ value, and DFS corresponds to selecting a node with the maximum number of $2$ flags in the box. 

An additional property of IBB$^{+}$ can be stated in terms of $T(p, k)$ and $T_{n}(p, k)$, where $T(p, k)$ defines the underlying tree of IBB$^{+}$ for given $p$ and $k$ values, and $T_{n}(p, k)$ defines the number of nodes (including the root and leaf nodes) of $T(p, k)$.
\begin{proposition}
For a given $p$ and $k$, $T_{n}(p,k)=T_{n}(p-1,k)+T_{n}(p-1,k-1)+1 $.
\end{proposition}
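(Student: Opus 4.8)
The plan is to prove the recurrence by a purely structural decomposition of $T(p,k)$ into its root together with the two subtrees hanging from the root's children, and then to identify those two subtrees with $T(p-1,k)$ and $T(p-1,k-1)$. Throughout I work with the flag representation introduced above and, following the proof of the preceding theorem, take $T(p,k)$ to be the worst-case tree in which the objective-dependent bound pruning (the test ``$\tilde f < v_j$'') is inactive, so that the shape of the tree is determined entirely by the branching rule together with the cardinality-based deletion conditions $(D1)$--$(D4)$, and hence depends only on $p$ and $k$. Because those four conditions merely count flags and never refer to the identity of a coordinate, any two admissible choices of branching direction yield isomorphic trees, so $T(p,k)$ is well defined.

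First I would record the invariant that drives the argument: the subtree rooted at any node is determined, up to isomorphism, by the pair $(u,k')$, where $u$ is the number of flag-$1$ (undetermined) coordinates at that node and $k' = k - m_2$ is the residual cardinality budget, with $m_2$ the number of flag-$2$ coordinates already committed. To justify this I would rewrite each of $(D1)$--$(D4)$ at an arbitrary descendant in terms of the flags created below the node: writing $m_2'$ and $m_0'$ for the numbers of flag-$2$ and flag-$0$ coordinates introduced within the subtree, the four conditions become $m_2' > k'$, $m_0' > u - k'$, $m_2' = k'$, and $m_0' = u - k'$, where I use the identity $u - k' = (p-k) - m_0$. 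These are exactly $(D1)$--$(D4)$ for a search tree on $u$ coordinates with budget $k'$, which establishes the isomorphism of the subtree with $T(u,k')$.

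With the invariant in hand the decomposition is immediate. The root of $T(p,k)$ has $u = p$ and $m_2 = 0$; since $1 \le k \le p-1$, none of $(D1)$--$(D4)$ holds there, so the root is branched on some flag-$1$ coordinate $\eta$ into exactly two children (the flag representation forces $r = 2$). The flag-$0$ child moves one coordinate from undetermined to zero, leaving $u = p-1$ and $m_2 = 0$, hence $k' = k$, so by the invariant its subtree is isomorphic to $T(p-1,k)$. The flag-$2$ child instead commits one nonzero, leaving $u = p-1$ and $m_2 = 1$, hence $k' = k-1$, so its subtree is isomorphic to $T(p-1,k-1)$. Counting nodes, the root contributes $1$ and the two disjoint subtrees contribute $T_n(p-1,k)$ and $T_n(p-1,k-1)$, which gives $T_n(p,k) = T_n(p-1,k) + T_n(p-1,k-1) + 1$.

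I expect the main obstacle to be the bookkeeping in the second step, namely checking that all four deletion conditions transport correctly under both reparametrizations $(p,k)\mapsto(p-1,k)$ and $(p,k)\mapsto(p-1,k-1)$; a sign slip or an off-by-one in the identity $u - k' = (p-k) - m_0$ would misidentify a subtree and break the recurrence, so this is the computation I would carry out most carefully. A secondary point worth stating explicitly is the boundary behavior: when $k \in \{0,p\}$ condition $(D3)$ or $(D4)$ already holds at the root, so $T_n(p,0) = T_n(p,p) = 1$; this supplies the terminating values for the recursion and confirms that the recurrence itself is exercised precisely in the branching range $1 \le k \le p-1$.
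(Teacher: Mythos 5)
Your proposal is correct and follows essentially the same route as the paper's proof: decompose $T(p,k)$ at the root into the flag-$0$ child's subtree, identified with $T(p-1,k)$, and the flag-$2$ child's subtree, identified with $T(p-1,k-1)$, then count the root separately. You simply make explicit the invariant (undetermined coordinates, residual budget) and the transport of the deletion conditions that the paper leaves implicit.
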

\begin{proof}
The root node of $T(p,k)$ produces 2 child nodes. The one corresponding to flag 2 can be thought of as a root node for the tree $T(p-1,k-1)$ and the other one corresponding to flag 0 can be thought of as a root node for the tree $T(p-1,k)$. So, $T(p-1,k)$ and $T(p-1,k-1)$ are the subtrees of $T(p,k)$. Consequently, we get the above result.
\end{proof}
\section{Solving the Best Subset Selection using IBB$^{+}$}
\label{sec:ibb_for_bss}
The \eqref{bss} can be re-formulated as \eqref{ccqo} problem and can be written as 
\begin{equation}\label{bbss}
    \min_{\beta \in B} \;\; f(\beta)=\frac{1}{2}\beta^{T}Q \beta + q^{T} \beta +c \;\;\; \text{subject to} \;\;\; \|\beta\|_{0} \leq k,   \tag{BBSS} 
\end{equation}
where $Q=2X^{T}X$, $q=-2X^{T}y$, and $c=y^{T}y$. If the chosen box $B$ is big enough to contain the solution of \eqref{bss}, then \eqref{bss} and \eqref{bbss} will have the same optimal solution (see \cite{bertsimasEtal:2015}). Common reasons to use the additional box $B$ may include
\begin{itemize}
    \item ensuring a finite optimal solution;
    \item facilitating certain effective search procedures;
    \item incorporating any prior knowledge about the bounds of the unknown parameters.
\end{itemize}
In step \ref{alg:ibb_boundstep} of IBB$^{+}$, for a working box $V$, if \( \mathcal{I}=\{ i : V_{i} \neq 0, i=1,...,p \} \) is an index set, then  
\[ lb \, f(V)= \min_{\beta \in B_{\mathcal{I}}} \; \; \frac{1}{2}\beta^{T}Q_{\mathcal{I}}\beta + q_{\mathcal{I}}^{T}\beta+c ,   \]
where box $B_{\mathcal{I}}=\prod_{i\in \mathcal{I}} B_{i}$, $Q_{\mathcal{I}}$ is a submatrix of $Q$ with rows and columns indexed by $\mathcal{I}$, and $q_{\mathcal{I}}$ is a subvector of $q$ indexed by $\mathcal{I}$.
We can use any convex quadratic optimization solver to solve the above problem. In that case, no interval arithmetic would be used. Consequently, round-off errors have to be accepted. Thus, IBB$^{+}$ is considered as a branch-and-bound method but not officially an interval algorithm in the traditional sense.
\subsection{Feasibility sampling}
Finding good quality feasible points within IBB$^{+}$ helps with more bound based deletions and can accelerate the algorithm significantly to reach the optimal solution. We introduce Sequential Feature Swapping (SFS) given by \algorithmcfname{ \ref{alg:sfs}}, which is an iterative procedure that relies on the idea of swapping bad predictors from the currently selected model with good predictors not in the model at each iteration, starting from a model with $k$ predictors.  

 Let $\mathcal{A}$ be the set of indices of all the available predictors and $\mathcal{I}$ be the set of indices of the $k$ predictors already selected in the model. The set $\mathcal{A} \backslash \mathcal{I}$ represents the complement of the set $\mathcal{I}$ w.r.t. $\mathcal{A}$, and let
\[ q(\mathcal{I}) = \min_{\beta \in B_{\mathcal{I}}} \; \; \frac{1}{2}\beta^{T}Q_{\mathcal{I}}\beta + q_{\mathcal{I}}^{T}\beta+c. \]
 For an index $ s \in \mathcal{I}$, we define the gain in the function value as
\( G(s)=q(\mathcal{I}\backslash s) -q(\mathcal{I}). \)
For an index $ s \in \mathcal{A}\backslash \mathcal{I}$, we define the reduction in the function value as
\( R(s)=q(\mathcal{I})-q(\mathcal{I}\cup s). \)
At each iteration of SFS, we will drop the predictor with the minimum gain, and pick the predictor with the maximum reduction, to be included in the model.
\begin{algorithm}
\caption{SFS}
\label{alg:sfs}
 \KwIn{$k$, $\mathcal{A}$, $B$, $q$}
 \KwOut{$\mathcal{I}^{\ast}$}
      Choose an initial index set $\mathcal{I}$ of $k$ predictors.\\ 
     \textit{(Drop)}\label{alg:sfs_drop} Find the index $j \in \mathcal{I}$ such that
       \[  j=\argmin_{s \in \mathcal{I}} \; \; G(s),   
       \] and set $\hat{\mathcal{I}}=\mathcal{I} \backslash j $.      \\
       \textit{(Pick)} Find the index $i \in \mathcal{A}\backslash \mathcal{I}$ such that
       \[  i = \argmax_{s \in \mathcal{A} \backslash \mathcal{I}} \; \; R(s). \]   \\
       \eIf{$ q( \hat{\mathcal{I}}\cup i ) < q(\mathcal{I}) $}{
       \textit{(Switch)}
        Update $\mathcal{I}= \hat{\mathcal{I}} \cup i $. Go to step \ref{alg:sfs_drop}. \\
       }{Set $\mathcal{I}^{\ast}=\mathcal{I}$,\Return.
       }
\end{algorithm}
\begin{proposition}
Algorithm \ref{alg:sfs} terminates after a finite number of iterations.
\end{proposition}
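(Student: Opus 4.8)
The plan is to exhibit a strictly monotone potential that ranges over a finite set, so that the algorithm cannot run forever. The natural potential is the objective value $q(\mathcal{I})$ attached to the current index set, and the whole argument is a monotonicity-plus-finiteness scheme.

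First I would check that the state of the algorithm never leaves a finite collection. Throughout the execution $\mathcal{I}$ stays a $k$-element subset of $\mathcal{A}$: the \textit{Drop} step forms $\hat{\mathcal{I}}=\mathcal{I}\backslash j$ with $|\hat{\mathcal{I}}|=k-1$, and the \textit{Switch} step replaces $\mathcal{I}$ by $\hat{\mathcal{I}}\cup i$ with $i\in\mathcal{A}\backslash\mathcal{I}$; since $i\notin\mathcal{I}$ we have $i\neq j$ and $i\notin\hat{\mathcal{I}}$, so $|\hat{\mathcal{I}}\cup i|=k$. Hence each updated set is again a $k$-subset of $\mathcal{A}$, and there are only $\binom{|\mathcal{A}|}{k}$ such subsets in total.

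Next I would track the objective along the run. The algorithm continues only through the \textit{Switch} branch, which is entered precisely when $q(\hat{\mathcal{I}}\cup i)<q(\mathcal{I})$. Thus every iteration that does not terminate strictly decreases $q(\mathcal{I})$. Because $q$ is a single-valued function of the subset $\mathcal{I}$, a strictly decreasing sequence $q(\mathcal{I}_{0})>q(\mathcal{I}_{1})>\cdots$ forces the corresponding subsets $\mathcal{I}_{0},\mathcal{I}_{1},\dots$ to be pairwise distinct: revisiting a subset would make its $q$-value recur, contradicting strict monotonicity.

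Finally I would combine the two observations. The distinct visited subsets all lie in a collection of size $\binom{|\mathcal{A}|}{k}$, so at most $\binom{|\mathcal{A}|}{k}$ switches can occur; once no improving swap exists, the guard $q(\hat{\mathcal{I}}\cup i)<q(\mathcal{I})$ fails, the algorithm sets $\mathcal{I}^{\ast}=\mathcal{I}$ and returns. This termination must therefore happen after finitely many iterations. I do not expect a genuine obstacle here beyond bookkeeping; the one point that deserves care is verifying that a \textit{Switch} really yields a \emph{new} $k$-subset (so that finiteness of the subset family can be invoked) and that it is the \emph{strict} inequality in the guard, rather than a weak one, that rules out cycling.
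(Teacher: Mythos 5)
Your proof is correct and rests on the same core observation as the paper's: the \textit{Switch} step is guarded by the strict inequality $q(\hat{\mathcal{I}}\cup i)<q(\mathcal{I})$, so the objective value strictly decreases across iterations. Where you diverge is in how finiteness is concluded. The paper argues that $q(\mathcal{I})$ decreases monotonically and is bounded below by $q(\mathcal{I}^{*})$, and infers termination from that; taken literally this is not sufficient, since a strictly decreasing sequence bounded below can still be infinite. The step that actually closes the argument is the one you supply explicitly: $\mathcal{I}$ always remains a $k$-subset of $\mathcal{A}$, strict decrease of $q$ forbids revisiting any subset, and there are only $\binom{|\mathcal{A}|}{k}$ subsets available, so at most that many switches can occur. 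Your version is therefore the more complete one; the lower bound $q(\mathcal{I}^{*})$ invoked by the paper is true but is not what does the work. Your attention to the fact that a switch produces a genuinely new $k$-subset (because $i\notin\mathcal{I}$ and $|\hat{\mathcal{I}}\cup i|=k$) is exactly the bookkeeping needed to make the finiteness argument airtight.
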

\begin{proof}
At each iteration, for the updated set $\mathcal{I}$, the value of $q(\mathcal{I})$ decreases monotonically, and $q(\mathcal{I})$ is bounded from below by $q(\mathcal{I}^{*})$, where $\mathcal{I}^{*}$ is the optimal set of indices of $k$ predictors for the \eqref{bbss} problem. Hence, the Algorithm \ref{alg:sfs} terminates after a finite number of iterations.
\end{proof} 
We note that Algorithm \ref{alg:sfs} shares a similar idea as in Efroymson's stepwise algorithm (see \cite{miller2002subset}) with the difference that Algorithm \ref{alg:sfs} starts when we already have $k$ predictors selected in the model, whereas the Efroymson stepwise algorithm starts with no predictor in the model and sequentially selects a new predictor with a check included to drop some previously selected predictor from the model at each step. Additionally, see the ``splicing algorithm'' described in \cite{zhu2020polynomial}, which further generalizes the idea of Algorithm \ref{alg:sfs} by letting more than one predictor be swapped to pick a desirable sparse model. Swapping more than one predictor increases the chance of finding a better feasible point at the cost of an increase in the computation time.
\subsection{Finding $lb \, f(\cdot)$ using QR decomposition}
We can also use a recursive way of updating the $lb \, f(\cdot)$ of a child box using the $lb \, f(\cdot)$ of the parent box.
Suppose the design matrix $X$ has full column rank. Using QRD we have $Q^{T}X=R$, where $Q \in \mathbb{R}^{ n\times p}$ is an orthogonal matrix, $R \in \mathbb{R}^{p\times p}$ is an upper triangular matrix. For the initial box $Y$, 
$lb \, f(Y)=f(\Hat{\beta})$ where \( \Hat{\beta}= \argmin_{\beta} \> \|y-X\beta\|_{2}^{2}=R^{-1} Q^{T}y \). 
Suppose we partition the initial box $Y$ along the coordinate direction $\eta$ to get two child boxes. The child box with flag $2$ has the same $lb\, f(\cdot)$ as its parent box $Y$. The child box with flag $0$ corresponds to the linear model $y=X_{1}\beta +\varepsilon$ with the matrix $X_{1}$ obtained from $X$ by dropping its $\eta^{th}$ column, we can compute $lb \, f(\cdot)$ for this child box as follows. Update the $Q$ and $R$ matrices to get $Q_{1}^{T} X_{1}=R_{1}$ where $Q_{1}\in \mathbb{R}^{(p-1) \times (p-1)}$ is an orthogonal matrix and $R_{1} \in \mathbb{R}^{(p-1)\times (p-1)}$ is an upper triangular matrix (see \cite{watkins2010fundmatrixcomp} for updating $QRD$ after dropping a column from the $X$ matrix). Then, $lb\, f(V)=f(\beta_{1})$ where \( \beta_{1}= \argmin_{\beta} \> \|y-X_{1}\beta\|_{2}^{2}=R_{1}^{-1}Q_{1}^{T}y \).
We can modify $Q_{1}$ and $R_{1}$ to find $lb\,f(\cdot)$ for the child boxes of $V$ in a similar way. However, this procedure is not very efficient in practice, because we have to save the $Q$ and $R$ matrices for each box as attributes of a node, which uses a lot of memory space.
\subsection{IBB$^{+}$ versus BB}
\label{sec:ibb_vs_bb}
Both IBB$^{+}$ and BB are globally convergent algorithms. Figures \ref{fig:IvlBBtree} and \ref{fig:BBtree} show IBB$^{+}$ and BB trees for a small instance where we want to choose 2 predictors out of 5 available predictors, assuming no bound-based deletion condition has taken place and we are branching on the first available variable in a box. Each of the grey boxes in Figure \ref{fig:IvlBBtree} has the same $lb \, f(\cdot)$ value as their parent boxes. Thus in the IBB$^{+}$ tree, we do not have to compute $lb \, f(\cdot)$ for one of the child boxes unless it is a terminal box where the remaining flag 1s are changed to flag 0s. On the other hand, in the BB tree, we evaluate $lb \, f(\cdot)$ at each node by removing the feature given by the number inside the node one at a time, where $0$ represents the root node with all the features included. The red nodes in the BB tree show the nodes that can be skipped using the ``minimum-solution-tree'' strategy given by \cite{yu1993more}, saving $lb \, f(\cdot)$ evaluations. Including the root node in the BB tree, both trees use the same number of $lb \, f(\cdot)$ calls to reach the optimal solution without using any bound-based deletion. However, in practice, the number of $lb \, f(\cdot)$ calls for BB with ``minimum-solution-tree'' will be much higher than IBB$^{+}$ due to bound based deletions as shown by Table \ref{tab:ols_calls_ibb_vs_bb}. Further, BB works by deleting one feature at each node to reach a feasible solution represented by a leaf node at the bottom level, while IBB$^{+}$ uses the idea of selecting support for different features to enumerate all the feasible solutions, and there is no sense of level in IBB$^{+}$ tree. 
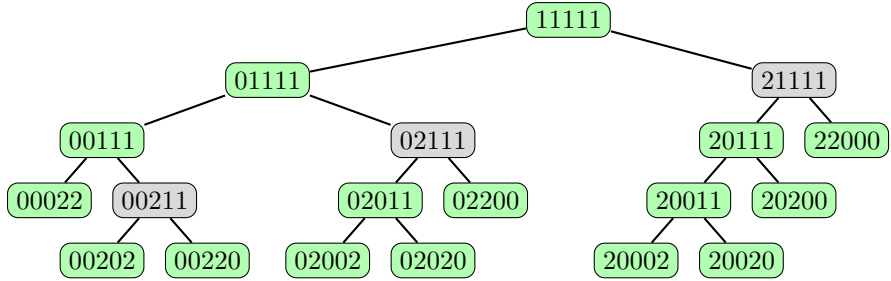
\begin{figure}
    \centering
    \begin{tikzpicture}[node distance=0.8cm]
\node (start) [node2] {11111};
\node (l11) [node2, below of=start, xshift=-4cm]{01111};
\node (l12) [node1, below of=start, xshift=3cm]{21111};
\node (l21) [node2, below of=l11, xshift=-2.2cm]{00111};
\node (l22) [node1, below of=l11, xshift=2.2cm]{02111};
\node (l23) [node2, below of=l12, xshift=-0.7cm] {20111};
\node (l24) [node2, below of=l12, xshift=0.7cm]{22000};
\node (l31) [node2, below of=l21, xshift=-0.7cm]{00022};
\node (l32) [node1, below of=l21, xshift=0.7cm]{00211};
\node (l33) [node2, below of=l22, xshift=-0.7cm]{02011};
\node (l34) [node2, below of=l22, xshift=0.7cm]{02200};
\node (l35) [node2, below of=l23, xshift=-0.7cm]{20011};
\node (l36) [node2, below of=l23, xshift=0.7cm]{20200};
\node (l41) [node2, below of=l32, xshift=-0.7cm]{00202};
\node (l42) [node2, below of=l32, xshift=0.7cm]{00220};
\node (l43) [node2, below of=l33, xshift=-0.7cm]{02002};
\node (l44) [node2, below of=l33, xshift=0.7cm]{02020};
\node (l45) [node2, below of=l35, xshift=-0.7cm]{20002};
\node (l46) [node2, below of=l35, xshift=0.7cm]{20020};
\draw [arrow] (start) -- (l11);
\draw [arrow] (start) -- (l12);
\draw [arrow] (l11) -- (l21);
\draw [arrow] (l11) -- (l22);
\draw [arrow] (l12) -- (l23);
\draw [arrow] (l12) -- (l24);
\draw [arrow] (l21) -- (l31);
\draw [arrow] (l21) -- (l32);
\draw [arrow] (l22) -- (l33);
\draw [arrow] (l22) -- (l34);
\draw [arrow] (l23) -- (l35);
\draw [arrow] (l23) -- (l36);
\draw [arrow] (l32) -- (l41);
\draw [arrow] (l32) -- (l42);
\draw [arrow] (l33) -- (l43);
\draw [arrow] (l33) -- (l44);
\draw [arrow] (l35) -- (l45);
\draw [arrow] (l35) -- (l46);
\end{tikzpicture}    
    \caption{IBB$^{+}$ tree for $p=5$ and $k=2$.}
    \label{fig:IvlBBtree}
\end{figure}
\begin{remark}
 Assuming no bound based deletions, IBB$^{+}$ and BB using ``minimum-solution-tree'' approach without in-level node ordering procedure will take exactly $\binom{p+1}{k+1}-\binom{p-1}{k+1}$ number of $lb\, f(\cdot)$ calls to reach the optimal solution. 
\end{remark} 
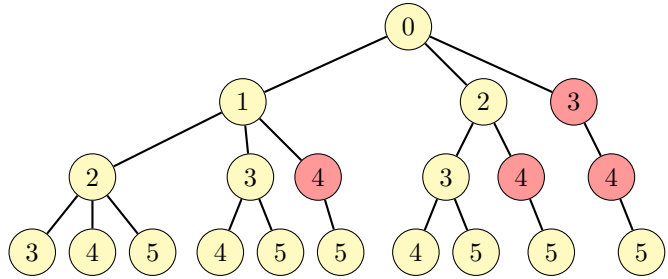
\begin{figure}
    \centering
    \begin{tikzpicture}[node distance=1cm]
\node (start) [node4]{0};
\node (d11) [node4, below of=start, xshift=-2.2cm]{1};
\node (d12) [node4, below of=start, xshift=1cm]{2};
\node (d13) [node3, below of=start, xshift=2.2cm]{3};
\node (d21) [node4, below of=d11, xshift=-2cm]{2};
\node (d22) [node4, below of=d11, xshift=0.1cm]{3};
\node (d23) [node3, below of=d11, xshift=1cm]{4};
\node (d24) [node4, below of=d12, xshift=-0.5cm]{3};
\node (d25) [node3, below of=d12, xshift=0.5cm]{4};
\node (d26) [node3, below of=d13, xshift=0.5cm]{4};
\node (d31) [node4, below of=d21, xshift=-0.8cm]{3};
\node (d32) [node4, below of=d21, xshift=0cm]{4};
\node (d33) [node4, below of=d21, xshift=0.8cm]{5};
\node (d34) [node4, below of=d22, xshift=-0.4cm]{4};
\node (d35) [node4, below of=d22, xshift=0.4cm]{5};
\node (d36) [node4, below of=d23, xshift=0.3cm]{5};
\node (d37) [node4, below of=d24, xshift=-0.4cm]{4};
\node (d38) [node4, below of=d24, xshift=0.4cm]{5};
\node (d39) [node4, below of=d25, xshift=0.4cm]{5};
\node (d40) [node4, below of=d26, xshift=0.4cm]{5};
\draw [arrow] (start) -- (d11);
\draw [arrow] (start) -- (d12);
\draw[arrow] (start) -- (d13);
\draw [arrow] (d11) -- (d21);
\draw [arrow] (d11) -- (d22);
\draw [arrow] (d11) -- (d23);
\draw [arrow] (d12) -- (d24);
\draw [arrow] (d12) -- (d25);
\draw [arrow] (d13) -- (d26);
\draw [arrow] (d21) -- (d31);
\draw [arrow] (d21) -- (d32);
\draw [arrow] (d21) -- (d33);
\draw [arrow] (d22) -- (d34);
\draw [arrow] (d22) -- (d35);
\draw [arrow] (d23) -- (d36);
\draw [arrow] (d24) -- (d37);
\draw [arrow] (d24) -- (d38);
\draw [arrow] (d25) -- (d39);
\draw [arrow] (d26) -- (d40);
\end{tikzpicture}
    \caption{BB tree for $p=5$ and $k=2$.}
    \label{fig:BBtree}
\end{figure}
\begin{table}
\centering
\caption{Number of OLS solutions needed to get to the optimal solution for IBB$^{+}$ and BB with in-level node ordering for OD examples of small-1 and small-2 type with $k\in \{5,10 \}$.\label{tab:ols_calls_ibb_vs_bb}}
\begin{tabular}{l l l l l l}
\hline
& &\multicolumn{2}{c}{k=10} & \multicolumn{2}{c}{k=5} \\
\cline{3-4}\cline{5-6}%
Example type & SNR & IBB$^{+}$  &   BB  &  IBB$^{+}$  &  BB \\
\hline
small-1 & 0.05  &    658    &    1036  &  378   &     1134 \\
   small-1 & 0.5   &    752    &    1148  &  381   &     1106 \\
    small-1 & 1    &    742    &    1139  &  367   &     1080 \\
    small-1 & 5    &    975    &    1554  &  254   &      773  \\
    small-2 & 0.05  &    39653   &  210179  & 7763   &  56131  \\ 
    small-2 & 0.5   &    39001   &  206137  &  7424  &  53632  \\
    small-2 & 1     &    38900   &  205324  &  7299  &  52504  \\
    small-2 & 5     &    39778   &  210140  &  6987  &  50729  \\
\hline
\end{tabular}
\end{table}
\section{Numerical results}
\label{sec:test_results}
In this section, we provide numerical results comparing the following three algorithms to solve the \eqref{bbss} problem. 
\begin{labeling}{AAAA}  
    \item[\textbf{IBB$^{+}$}] The non-interval version of Algorithm \ref{alg:ibb-bbss}, using an integer flag array to represent the box;
    \item[\textbf{BB}] The branch-and-bound algorithm following the implementation given in \cite{somol2004fast} with ``minimum-solution-tree'' strategy included as suggested in \cite{yu1993more};
    \item[\textbf{MIO}] The mixed integer optimization formulation solved by GUROBI.
\end{labeling} 
We note that IBB$^{+}$, BB, and MIO are exact methods to solve \eqref{bbss} by finding a globally optimal solution if no hard-stopping conditions are imposed. For MIO approach, we followed the implementation given in \cite{bertsimasEtal:2015} for setting up the MIO formulation. For $p\leq n$ and $p>n$ cases, we use two different MIO formulations given by equations 2.5 and 2.6 in \cite{bertsimasEtal:2015}, respectively.  After converting \eqref{bbss} to an equivalent MIO formulation, we use the \textsc{GUROBI} solver with its presolve feature turned off and restricting the number of threads to one for better comparison, leaving all the other parameters set to their default values. 
All the algorithms have been implemented on \textsc{MATLAB} R2021a, and testing has been done on The University of Alabama High-Performance Computer. For the MIO approach, \textsc{GUROBI}(\cite{gurobi}) has been called using the \textsc{MATLAB} interface for \textsc{GUROBI} 9.0.2. Selection of a new node at each iteration in IBB$^{+}$ algorithm is done using the BFS criteria. 
Also, the coordinate direction $\eta$ to partition the box $Y$ to obtain child boxes (step \ref{alg:ibb_partDir}) is done by choosing $\eta=\{i: \Hat{x}_{i}=\max\,|\Hat{x}|\}$, where $f(\Hat{x})=lb\,f(Y)$. The $lb\, f(\cdot)$ in step \ref{alg:ibb_boundstep} is computed using the parallel tangent method to solve convex quadratic optimization problems.
\subsection{Choosing the initial box}
Reference \cite{bertsimasEtal:2015} provided both theoretical and data-based approaches to choose the bound $M$ used in their MIO formulation. In our testing, instead of using a uniform bound for $\beta$ we use a non-uniform bound to define the box in \eqref{bbss}. We first find a solution $\Hat{\beta}$ to \eqref{ols} and define $m=max (|\Hat{\beta}_{i}|,i=1,...,p)$. Then the box $B=B_{1} \times  ... \times  B_{p}$  is defined as 
\[ - \tau m -|\Hat{\beta}_{i}| \leq B_{i} \leq |\Hat{\beta}_{i}|+ \tau m , \]
where $i\in \{1,...,p\}$ and $\tau$ is an enlargement factor that has been set to $\tau=1$.
For a fair comparison, we use the same box for all three algorithms (i.e. whenever we have to find a solution to \eqref{ols} problem at any step of these algorithms, we use this box). 
Note that the initial box $B$ is not known a \textit{priori}. If the chosen box $B$ is not big enough to contain the solution of \eqref{bss}, then the optimal solution of \eqref{bbss} may not be the same as the optimal solution of \eqref{bss}. However, if we do not use a box to find a solution to the \eqref{ols} problem within IBB$^{+}$ and BB, we can solve \eqref{bss} directly. While a bounded initial box is necessary for MIO solver of \eqref{bss}, IBB$^{+}$ and BB remain applicable for solving \eqref{bss} directly as long as \eqref{ols} solver does not require a bounded search space. In particular, the parallel tangent method used in our IBB$^{+}$ works better over the entire Euclidean space.
\subsection{Test data setup}
We have tested the three methods using synthetic data sets constructed as follows. Firstly, we find the design matrix $X \in \mathbb{R}^{n \times p}$ by sampling each row (i.i.d.) from a $p$-dimensional multivariate normal distribution $N(0,\Sigma)$, with mean zero and covariance matrix $\Sigma$. We normalize the columns of $X$ to have zero mean and unit $l_{2}$-norm. We construct the coefficient vector $\beta^{0}$. We choose a noise vector $\varepsilon$ (i.i.d.) from the normal distribution $N(0,\sigma^{2})$, where the variance $\sigma^{2}$ is chosen according to the given signal-to-noise (SNR) ratio defined as
\( \text{SNR}:=\frac{\|X\beta^{0} \|_{2}^{2}}{ \sigma^{2} }.  \)
Finally, we get the response vector $y$ using $y=X\beta^{0}+\varepsilon$. 
\tablename{ \ref{tab:sma_med_lar_eg_setup}} shows test examples we used in 3 different dimension groups for the overdetermined (OD, $p<n$) case and the underdetermined (UD, $p>n$) case. We have tested three groups of examples. We chose $k_{0}$ (the number of non-zero entries in $\beta^{0}$) to be 10, and the covariance matrix $\Sigma$ is such that $\Sigma_{i,j}=0.8$ when $i\neq j$ and $\Sigma_{i,i}=1$. We want to select the best model with 5 and 10 predictors, i.e. $k \in \{5,10 \}$. We tested three examples by varying $\beta^{0}$.
\begin{table}
\centering
\caption{Test examples dimension setup.\label{tab:sma_med_lar_eg_setup}}
\begin{tabular}{l l l l }
     \hline
     Type & $p$ & $n$ for OD & $n$ for UD   \\
     \hline
     small-1 & 20 & 100 & 10 \\
     small-2 & 40 & 200 & 20 \\
     small-3 & 60 & 300 & 30 \\
     small-4 & 80 & 400 & 40 \\
     \hline
     medium-1 & 200 & 1000 & 100 \\
     medium-2 & 300 & 1000 & 100 \\
     medium-3 & 400 & 2000 & 100 \\
     medium-4 & 500 & 2000 & 100 \\
     \hline
     large-1 & 800 & 4000 & 200 \\
     large-2 & 1000 & 4000 & 200 \\
     large-3 & 1500 & 8000 & 300 \\
     large-4 & 2000 & 8000 & 300 \\
    \hline
\end{tabular}
\end{table}
\begin{example}
 Generate $\beta^{0}$ such that $\beta_{i}^{0}=1$ for $k_{0}$ equally spaced indices from the set $\{1,...,p \}$, rounding to the greatest integer if needed.    
\end{example}

\begin{example}
Generate $\beta^{0}$ by assigning the first $k_{0}$ entries as $1$, that is $\beta_{i}^{0}=1$ for $i=1,...,k_{0}$. 
\end{example}

\begin{example}
Generate $\beta^{0}$ by picking a random subset from $\{1,...,p\}$ of $k_{0}$ indices, and then we assign random integer values between $1$ and $5$ to those indexed variables.  
\end{example}
We run the three examples in small, medium, and large dimensional settings as given in \tablename{ \ref{tab:sma_med_lar_eg_setup}} with 4 SNR values SNR $\in$ $ \{0.05, 0.5, 1, 5 \}$. 
We compare two aspects of these algorithms: solution quality and CPU time.
\subsection{Performance profiles and box plots}
To visualize the output data conveniently, we adopt two commonly used types of plots: performance profiles and box plots. For any number of examples tested and any predetermined performance measure, each solver could be associated with any such plot. By grouping these plots together, we can then visually compare all solvers.

Performance profiles as introduced in \cite{dolan2002perprof} use the idea of comparing the ratio of one solver$'$s performance measure with the best (minimum) performance measure among all solvers for that problem. More specifically, for each problem $p$ and solver $s$, let $t_{p,s}$ define a performance measure that we want to compare. We calculate the performance ratio as 
\[ r_{p,s}=\frac{t_{p,s}}{\min \{t_{p,s}:s\in S \} }\]
where $S$ is the set of solvers. We plot the resulting data using an empirical CDF plot. 

Box plots for sample data give us a visualization of five summary statistics of any given performance measure. The bottom and top of each box are the 25th and 75th percentiles of the sample. The horizontal line in the middle of the box shows the median of the sample data. The vertical lines extending from the box go to the upper and lower extreme. The observations beyond the upper and lower extremes are marked as outliers. For all the box plots in this paper, a value is called an outlier (marked as $+$ sign) if it is more than 1.5 times the interquartile range away from the bottom or top of the box.  
\subsection{Relative gap percentage as a performance measure}
To compare the solution quality, we use the Relative Gap $\%$ defined as 
\[ \left(  \frac{\Tilde{f}- f^{*}}{f^{*}} \right)100,\] 
where, for a particular example, $f^{*}$ is the best function value found by any algorithm and $\Tilde{f}$ is the function value from a given algorithm. A small Relative Gap $\%$ implies that the solution from a given algorithm is close to the best solution. 

Figures \ref{fig:boxPlotRelGapOdIbbBbMio} and \ref{fig:boxPlotRelGapUdIbbBbMio} show box plots of the Relative Gap $\%$ for the three examples in small, medium, and large dimension regimes in OD and UD cases, respectively. The general trend is that MIO always provides solutions with the lowest Relative Gap $\%$, followed closely by IBB$^{+}$. BB is the worst among the three algorithms. For small-dimension OD examples, both IBB$^{+}$ and BB are close to the best solution. However, for medium and large-dimension OD examples, IBB$^{+}$ is clearly better than BB, with BB not being able to provide the best solution for any of the large-dimension OD examples. UD examples are more computationally challenging than OD examples because of the greater randomness in the data and the lower chance of bound-based deletions for every branch and bound algorithm. BB is a lot worse than IBB$^{+}$ in this case. Note that in the UD case, due to the special formulation, the dimension of the problem to solve for MIO is $min(p,n)$, which gives MIO an advantage over IBB$^{+}$ and BB algorithms.        
\begin{figure}
    \centering
    \includegraphics[width=0.7\textwidth]{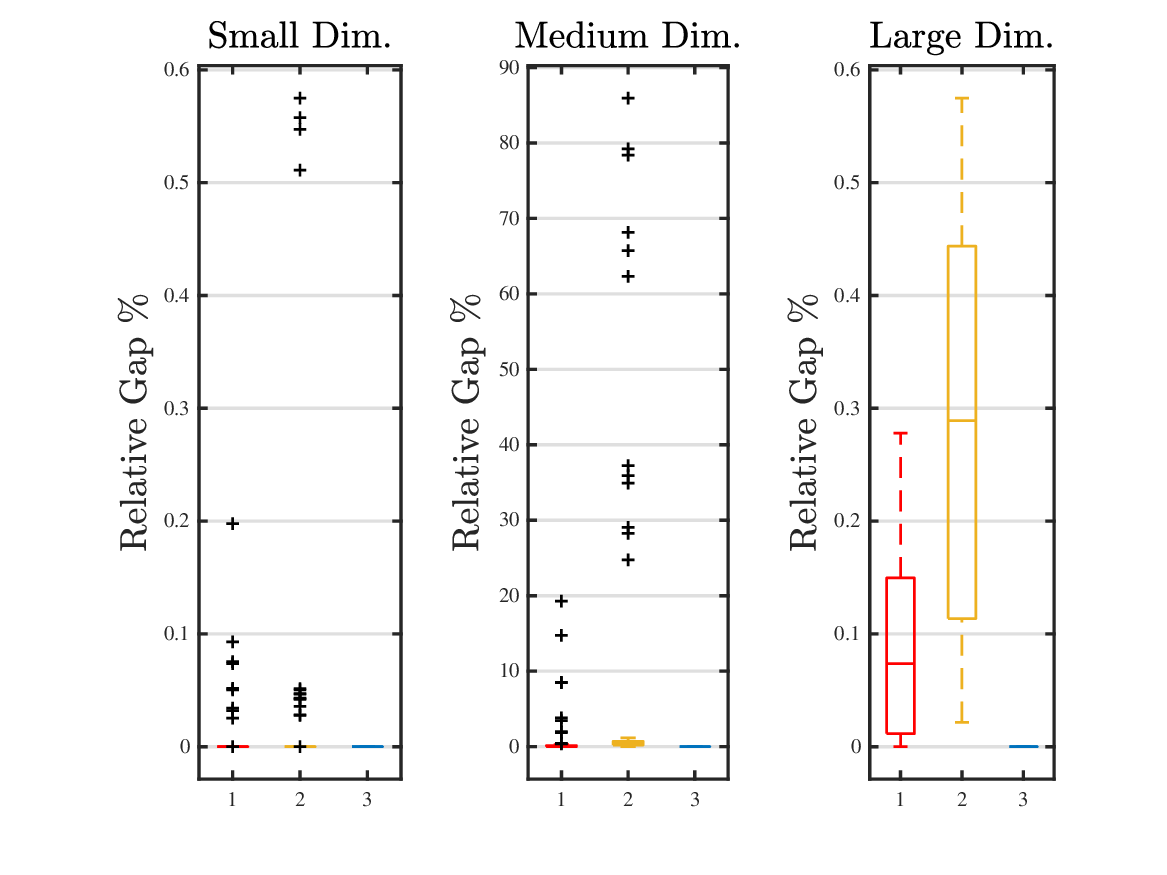}
    \caption{Box plots of Relative Gap $\%$ for examples 1, 2, and 3 in small, medium, and large dimension regimes with 4 SNR values in OD case with $k\in \{5,10 \}$ for 1) IBB$^{+}$; 2) BB; 3) MIO. }
    \label{fig:boxPlotRelGapOdIbbBbMio}
\end{figure}
\begin{figure}
    \centering
    \includegraphics[width=0.7\textwidth]{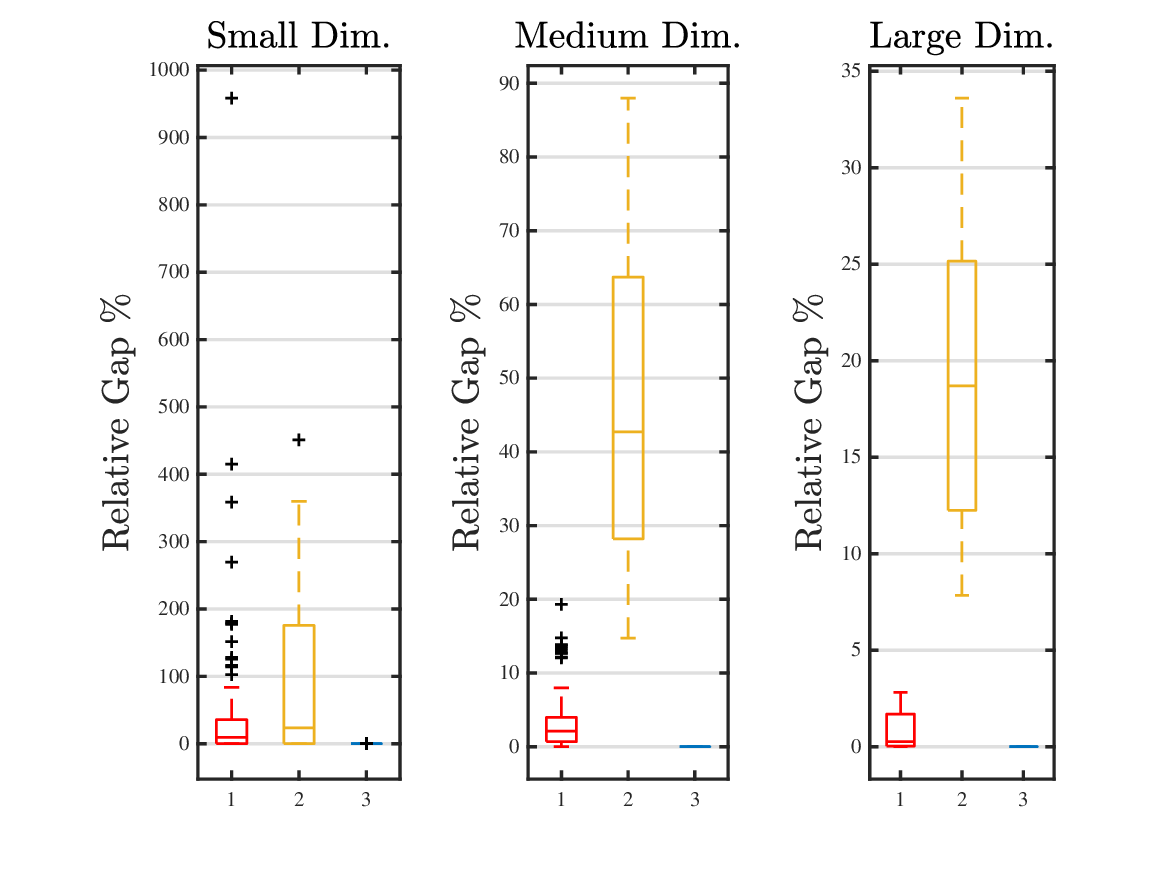}
    \caption{Box plots of Relative Gap $\%$ for examples 1, 2, and 3 in small, medium, and large dimension regimes with 4 SNR values in UD case with $k\in \{5,10 \}$ for 1) IBB$^{+}$; 2) BB; 3) MIO. }
    \label{fig:boxPlotRelGapUdIbbBbMio}
\end{figure}
\subsection{CPU time as a performance measure}
It is well known that finding a globally optimal solution using an algorithm can take a lot of CPU time. Even if the algorithm finds the optimal solution quickly, the only way to certify its optimality is to go over the space of all the possible solutions. In our testing, we are using a hard CPU time limit of $10$ minutes, meaning that after $10$ minutes of CPU time, we will stop the algorithm and report the current best solution as the final solution for that algorithm. Also, for IBB$^{+}$ and BB algorithms, we set the maximum iteration limit to be 10,00,000 and we stop the algorithm once this limit has been reached (reporting the current best solution as the final solution). We have incorporated two soft stops in IBB$^{+}$ along with these hard limits. We will stop the algorithm if $\tilde{f}$ in IBB$^{+}$ does not improve for $500$ iterations or for $5$ minutes of CPU time. With these soft stops included, IBB$^{+}$ can provide a solution close to optimal with much less CPU time.

Figures \ref{fig:perprofOdEgsCpuIbbBbMio} and \ref{fig:perprofUdEgsCpuIbbBbMio} show the performance profiles of CPU time for the three examples in small, medium, and large dimension regimes in OD and UD cases, respectively.
Due to the soft stopping criteria included in IBB$^{+}$, it is the fastest among the three algorithms. For small dimension examples, IBB$^{+}$ and MIO are close. BB is the worst of the three algorithms in terms of CPU time and generally stops due to the hard CPU time limit of 10 minutes.  
\begin{figure}
    \centering
    \includegraphics[width=0.7\linewidth]{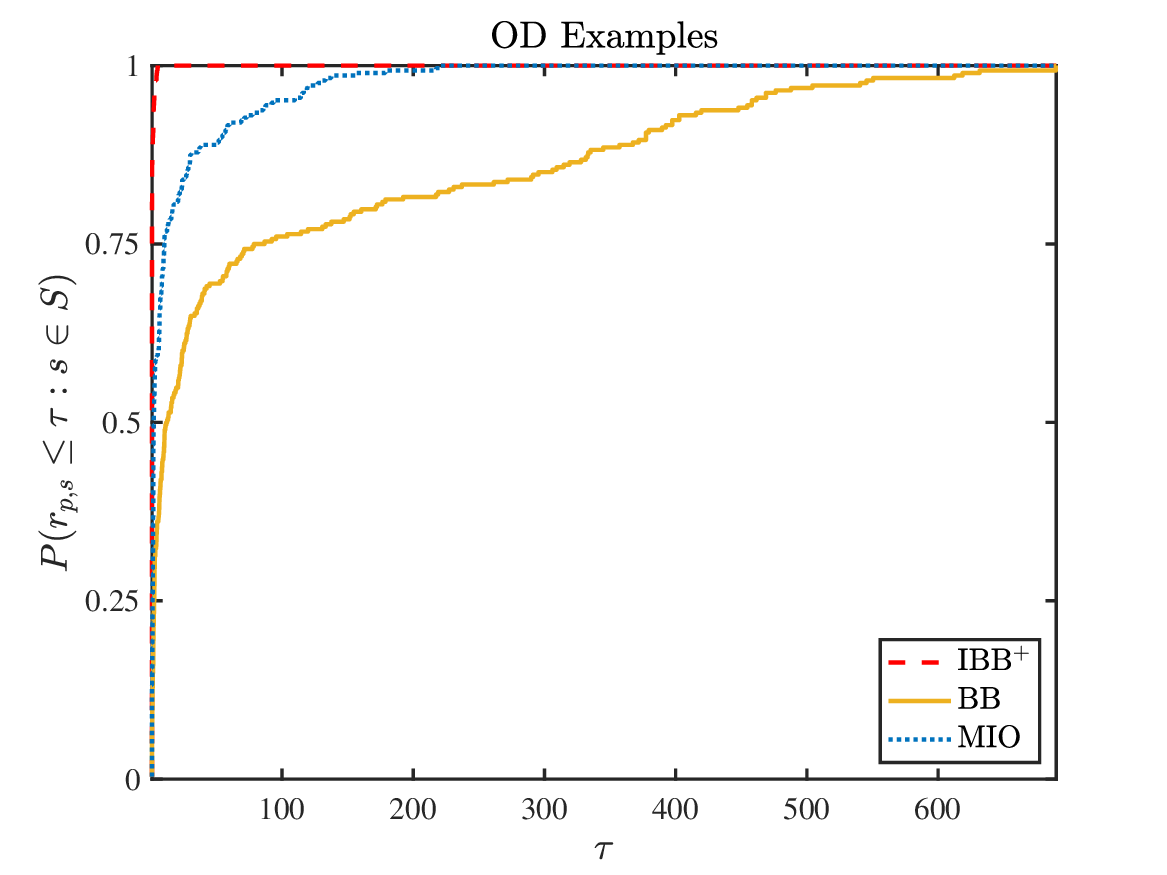}
    \caption{Performance profiles of CPU time for examples 1, 2, and 3 in small, medium, and large dimension regimes with 4 SNR values in OD case and $k\in \{5,10 \}$.}
    \label{fig:perprofOdEgsCpuIbbBbMio}
\end{figure}
\begin{figure}
    \centering
    \includegraphics[width=0.7\linewidth]{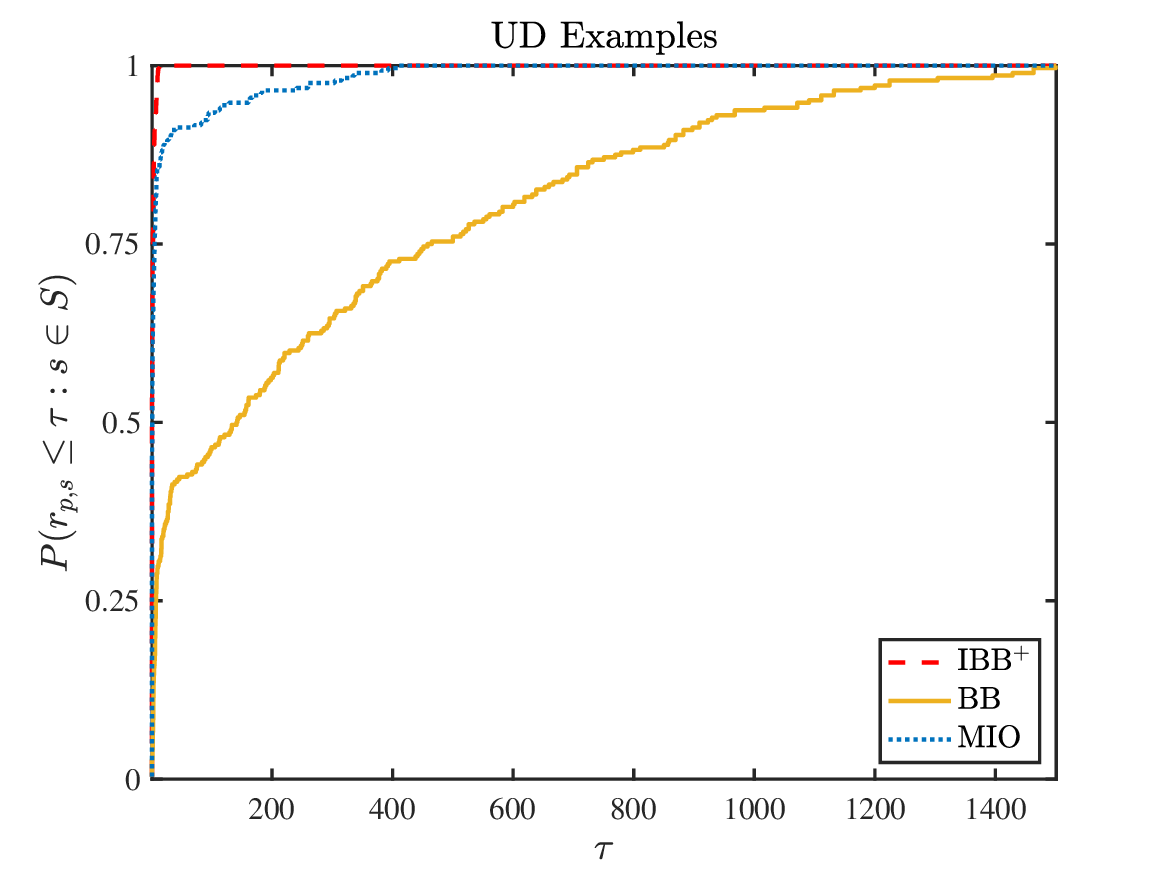}
    \caption{Performance profiles of CPU time for examples 1, 2, and 3 in small, medium, and large dimension regimes with 4 SNR values in UD case and $k\in \{5,10 \}$.}
    \label{fig:perprofUdEgsCpuIbbBbMio}
\end{figure}
\section{Conclusion}
\label{sec:discussion}
In this paper, we introduced IBB$^{+}$ that follows the framework of interval branch-and-bound method to solve \eqref{ccqo} problem. IBB$^{+}$ finds a globally optimal solution of \eqref{ccqo} using a special enumeration to go over all the possible subsets and discard those that cannot contain an optimal solution using some deletion conditions. We applied IBB$^{+}$ to solve the best subset selection problem in regression and compared it with two other exact methods, BB and MIO. The numerical results show that IBB$^{+}$ outperforms BB and is competitive with MIO. We can further include some acceleration strategies within IBB$^{+}$ to make it faster while keeping the global convergence of the algorithm. IBB$^{+}$ does not need the convexity of the objective function as an assumption as long as we are using tight lower bounds within the algorithm. The proposed algorithm can also accommodate linear inequality constraints in \eqref{ccqo} by making appropriate adjustments within IBB$^{+}$ but still maintaining the global convergence of the algorithm.  
\section*{Acknowledgement}
\small
We would like to thank Ward Jaeger for his contribution in implementing the heap data structure. We also thank The University of Alabama and the Office of Information Technology for providing high-performance computing resources and support that have contributed to these research results.
\section*{Reproducibility}
\small
 The test results in this paper can be reproduced by downloading the corresponding files from  
 \href{https://github.com/vikrasingh/bss-ibb-bb-mio}{https://github.com/vikrasingh/bss-ibb-bb-mio}.
\bibliographystyle{plain} 
\bibliography{main} 

\end{document}